\newtheorem{lemma}{Lemma}
\newtheorem{theorem}{Theorem}
\newtheorem{conjecture}{Conjecture}
\newtheorem{remark}{Remark}
\newcommand{\dR}{\mathbb {R}}
\newcommand{\tls}{{\rm t}_{\textsc{ls}}}
\newcommand{\trel}{{\rm t}_{\textsc{rel}}}
\newcommand{\tmls}{{\rm t}_{\textsc{mls}}}
\newcommand{\dist}{{\mathrm{dist}}}
\newcommand{\EE}{{\mathbf{E}}}
\newcommand{\E}{{\mathcal{E}}}
\newcommand{\X}{{\mathcal{X}}}
\newcommand{\LL}{{\mathcal{L}}}
\newcommand{\Ent}{{\mathrm{Ent}}}
\newcommand{\Lip}{{\mathrm{Lip}}}
\newcommand{\diam}{{\mathrm{diam}}}
\newcommand{\PP}{{\mathbf{P}}}
\newcommand{\cE}{\mathcal{E}}
\newcommand{\dX}{\mathbb {X}}
\title{Intrinsic regularity in the discrete log-Sobolev inequality}
\author{Justin Salez and Pierre Youssef}
\begin{document}
\maketitle
\begin{abstract}
The  \emph{chain rule} lies at the heart of the powerful Gamma calculus for Markov diffusions on manifolds, providing remarkable connections between several fundamental notions such as Bakry-\'Emery curvature, entropy decay, and hypercontractivity. For Markov chains on finite state spaces, approximate versions of this chain rule have recently been put forward, with an extra cost that depends on the log-Lipschitz regularity of the considered observable. Motivated by those findings, we here investigate the regularity of extremizers in the discrete log-Sobolev inequality. Specifically, we show that their log-Lipschitz constant is bounded by a universal multiple of $\log d$, where $d$ denotes the inverse of the smallest non-zero transition probability. As a consequence, we deduce that the log-Sobolev constant of any reversible Markov chain on a finite state space is at least a universal multiple of $\kappa/\log d$, where $\kappa$ is the Bakry-\'Emery curvature. This is a sharp discrete analogue of what is perhaps the most emblematic application of the Bakry-\'Emery theory for diffusions. We also obtain a very simple proof of the main result in \cite{MR4620718}, which asserts that the log-Sobolev constant and its modified version agree up to a $\log d$ factor. Our work consolidates the role of the sparsity parameter $\log d$ as a universal cost for transferring results from Markov diffusions to discrete chains.
\end{abstract}

\section{Introduction}

Functional inequalities play a central role in the analysis of Markov semi-groups and the concentration-of-measure phenomenon \cite{MR3185193,MR1767995,MR1849347}. In particular, the log-Sobolev inequality (LSI) and its modified version (MLSI)  \cite{MR2283379} are powerful tools for quantifying hypercontractivity and entropy decay respectively, with applications ranging from the analysis of mixing times of Markov chains to functional analysis and statistical physics. Let us recall their definition in the simple setting of finite state spaces. Throughout the paper, we consider an irreducible transition matrix $T$ on a finite state space $\dX$, satisfying the reversibility condition
\begin{eqnarray*}
\forall (x,y)\in\dX^2,\qquad \pi(x)T(x,y) & = & \pi(y)T(y,x),
\end{eqnarray*}
with respect to some probability measure $\pi$. We let $(P_t)_{t\geq 0}$ denote the corresponding continuous-time Markov semi-group, whose  generator acts on any $f\colon\dX\to\dR$ as follows:
\begin{eqnarray*}
(\LL f)(x) & := & \sum_{y\in\dX}T(x,y)\left(f(y)-f(x)\right). 
\end{eqnarray*}
The associated carr\'e du champ operator is then given by the formula
\begin{eqnarray*}
\Gamma(f,g)(x) & := &  \frac{1}{2}\sum_{y\in\dX}T(x,y)\left(f(y)-f(x)\right)\left(g(y)-g(x)\right),
\end{eqnarray*}
and integrating this expression against $\pi$ gives rise to the Dirichlet form  
\begin{eqnarray}
\label{Dirichlet}
\E(f,g) & = &   \EE\left[\Gamma(f,g)\right],
\end{eqnarray}
where $\EE[f]=\sum_{x\in\dX}\pi(x)f(x)$ denotes the expectation of $f$ with respect to $\pi$. 
As usual, we simply write $\Gamma(f)$ and $\cE(f)$ when $g=f$, and we define the entropy of $f\colon\dX\to[0,\infty)$ as
\begin{eqnarray*}
\Ent(f)& := & \EE[f\log f]-\EE[f]\log\EE[f].\end{eqnarray*}
Following the seminal works \cite{diaconis,MR2283379}, we define the inverse log-Sobolev constant $\tls$ and its modified version $\tmls$  as the smallest numbers such that the functional inequalities
\begin{eqnarray}
\label{LSI}
\Ent(f) & \leq & \tls\, \E(\sqrt{f}),\\
\label{MLSI}\Ent(f) & \leq & \tmls\, \E(f, \log f),
\end{eqnarray}
hold for all $f\colon\dX\to(0,\infty)$. In the completely different setting of Markov diffusions on Euclidean spaces or Riemannian manifolds, the generator $\LL$ is a second-order differential operator, and the associated carr\'e du champ operator classically satisfies the chain rule 
\begin{eqnarray}\label{eq: chain rule}
    \Gamma(f, \Phi(g)) & = & \Phi'(g) \Gamma(f,g),
\end{eqnarray}
for any smooth function $\Phi:\dR\to \dR$; see, e.g., the textbook \cite{MR3155209}. In particular, it follows that 
\begin{eqnarray}
\label{logchainrule}
\Gamma(f,\log f) & = & \frac{\Gamma(f)}{f} \ = \ 4\Gamma(\sqrt{f}),
\end{eqnarray}
 which readily implies that $\tls=4\tmls$. Let us emphasize that this remarkable equivalence is specific to diffusions: in general, we only have the one-sided inequality 
 \begin{eqnarray}
 \label{onesided}
 \tls & \ge & 4\tmls.
 \end{eqnarray}
Indeed, the log-Sobolev inequality (\ref{LSI}) quantifies the hypercontractive nature of the semi-group $(P_t)_{t\ge 0}$, whereas its modified version (\ref{MLSI}) controls the decay of entropy, a notion which is much weaker in the absence of the chain rule (see, e.g., \cite{MR2283379}). Nevertheless, it was recently observed in \cite{MR4620718} that an approximate version of (\ref{logchainrule}) remains valid in the discrete setting, albeit with an additional cost that depends on the spatial regularity of the considered observable. Specifically, one can easily check  that
\begin{eqnarray}
\label{approxlogchainrule}
\Gamma(f,\log f) & \le & \phi(\Lip(\log f))\,\Gamma(\sqrt{f}),
\end{eqnarray}
where $\Lip(g):=\max\left\{|g(x)-g(y)|\colon (x,y)\in\dX^2, T(x,y)>0\right\}$ denotes the Lipschitz constant of a function $g\colon \dX\to\dR$, and where we have introduced the (continuously increasing, near linear) cost function $\phi\colon[0,\infty)\to[4,\infty)$ defined by
\begin{eqnarray*}
\phi(r) & := & \frac{e^{\frac r 2}+1}{e^{\frac r2}-1}r.
\end{eqnarray*}
This observation motivates the investigation of the log-Lipschitz constant of extremizers in the log-Sobolev inequality (\ref{LSI}), and our first main contribution is a universal and sharp answer to this question. As in \cite{MR4780485,MR4620718,pedrotti2025newcutoffcriterionnonnegatively}, we introduce the sparsity parameter
\begin{eqnarray}
\label{def:d}
d & := & \max\left\{\frac{1}{T(x,y)}\colon (x,y)\in\dX^2,T(x,y)>0\right\},
\end{eqnarray}
which, in the special case where $T$ is the transition matrix of a simple random walk on a graph, corresponds to the maximum degree. Note that $d\ge 2$ unless $|\dX|=1$, in which case all results presented here become trivial. 
\begin{theorem}[Intrinsic regularity in the discrete LSI]\label{th: regularity of extremal}If $f\colon \dX\to(0,\infty)$ achieves equality in the log-Sobolev inequality (\ref{LSI}), then 
\begin{eqnarray*}
\Lip(\log f) & \le & 14\log d.
\end{eqnarray*}
\end{theorem}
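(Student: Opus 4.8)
The plan is to exploit the variational characterisation of extremizers: write down the Euler--Lagrange equation, localise it at the steepest edge to reduce everything to a lower bound on $\min f$, and then establish such a bound by a second, more global, use of extremality.

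\textbf{Euler--Lagrange equation.} We may assume $f$ non-constant, since otherwise $\Lip(\log f)=0$; and since $\Ent$ and $h\mapsto\cE(\sqrt h)$ are both $1$-homogeneous while $\Lip(\log f)$ is scale-invariant, we normalise $\EE[f]=1$. An extremizer maximises the smooth ratio $h\mapsto\Ent(h)/\cE(\sqrt h)$ over $\{h>0\}$, so its gradient vanishes there; using $\partial_{h(x)}\Ent(h)=\pi(x)\log(h(x)/\EE[h])$ and $\partial_{h(x)}\cE(\sqrt h)=-\pi(x)(\LL\sqrt h)(x)/\sqrt{h(x)}$, together with the fact that the Lagrange multiplier equals $\tls$ (by Euler's identity $\langle\nabla\Ent(f),f\rangle=\Ent(f)=\tls\cE(\sqrt f)=\tls\langle\nabla\cE(\sqrt f),f\rangle$), one finds that $f$ satisfies, for every $x\in\dX$,
\[
\sum_{y\in\dX}T(x,y)\sqrt{\frac{f(y)}{f(x)}}\ =\ 1-\frac{\log f(x)}{\tls},
\]
equivalently $\LL\sqrt f=-\tls^{-1}\sqrt f\log f$. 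Since the left-hand side is a positive combination, this already yields the free bound $\log f(x)<\tls$ for all $x$; the entire difficulty is to bound $\min f$ from below.

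\textbf{Reduction.} Let $L:=\Lip(\log f)$, attained on an edge $\{x_0,y_0\}$ with $\log f(x_0)-\log f(y_0)=L$. Keeping only the $y=x_0$ term in the Euler--Lagrange identity at $y_0$, and using $T(y_0,x_0)\ge 1/d$, we get
\[
1-\frac{\log f(y_0)}{\tls}\ =\ \sum_{y\in\dX}T(y_0,y)\sqrt{\frac{f(y)}{f(y_0)}}\ \ge\ T(y_0,x_0)\,\sqrt{\frac{f(x_0)}{f(y_0)}}\ \ge\ \frac{1}{d}\,e^{L/2}.
\]
If $f(y_0)\ge 1$ this forces $L\le 2\log d$; in general it shows that the theorem follows once we know $\log(1/\min f)\le(d^{6}-1)\tls$, for then $e^{L/2}\le d\cdot d^{6}=d^{7}$ (this is where $d\ge 2$ is used), i.e.\ $L\le 14\log d$.

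\textbf{The crux: controlling the depth of a well.} This is where the real work lies. Rewriting the Euler--Lagrange relation for $v:=\sqrt f$ as $Tv(x)=v(x)\,(1-\tls^{-1}\log f(x))$ shows that wherever $f$ is much below $1$ the neighbours of $x$ are, on average, much larger than $v(x)$; following a path of steepest ascent of $v$ then makes $v$ grow geometrically until $f$ climbs back to scale $e^{-\tls}$, which by itself bounds $\log(1/\min f)$ only in terms of $\tls$ and $|\dX|$. Removing the spurious dependence on $|\dX|$ is the delicate point: I would combine the above with a genuinely global use of extremality, feeding a suitable modification of $f$ --- a truncation $h=\min(f,c)$, or a renormalised power $h=f^{-\gamma}$ --- into the log-Sobolev inequality, chosen so that, were the well of $f$ too deep, $h$ would satisfy $\Ent(h)>\tls\,\cE(\sqrt h)$, contradicting the definition of $\tls$ (equivalently, contradicting the extremality of $f$). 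I expect this comparison step, and the bookkeeping that produces the exponent $d^{6}$ and hence the constant $14$, to be the main obstacle.

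\textbf{Conclusion.} Granting $\log(1/\min f)\le(d^{6}-1)\tls$, the displayed inequality gives $e^{L/2}\le d^{7}$, i.e.\ $\Lip(\log f)=L\le 14\log d$; and when $f$ is constant the bound is trivial.
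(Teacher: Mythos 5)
Your first two steps are sound and coincide with the paper's: the Euler--Lagrange identity $\LL\sqrt f=-\tls^{-1}\sqrt f\log f$ (the paper quotes it as a known structural lemma for LSI extremizers) and the localisation at the steepest edge, which after keeping only the $T(y_0,x_0)\ge 1/d$ term gives $e^{L/2}\le d\bigl(1-\log f(y_0)/\tls\bigr)$. But the step you yourself flag as ``the crux'' --- a bound on $\log(1/\min f)$ of the form $(d^6-1)\tls$ --- is exactly the hard part of the theorem, and your proposal does not prove it. The steepest-ascent argument, as you note, only yields a bound depending on $|\dX|$, and the suggested fix (feeding a truncation $\min(f,c)$ or a power $f^{-\gamma}$ back into the LSI to contradict extremality) is left entirely unverified; no choice of modification is exhibited for which the comparison actually closes. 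So the proposal has a genuine gap: the global input that removes the dimension dependence is missing.

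For comparison, the paper closes this gap differently and more cheaply. It first proves a new diameter bound $\diam\le\sqrt2\,\tls$ (via Herbst's concentration argument under the MLSI, together with $\log(1/\pi_\star)\le\tls$ and $\tls\ge 4\tmls$). Writing $g=\sqrt{f/\EE[f]}$ and $\ell=\Lip(\log g)=L/2$, the normalisation $\EE[g^2]=1$ forces $\max g\ge1$, hence $-\log g(x)\le\ell\,\diam\le\sqrt2\,\ell\,\tls$; note that the depth of the well is controlled in terms of the unknown $\ell$ itself rather than by a fixed power of $d$. Plugging this into the same localized Euler--Lagrange inequality yields the self-improving relation $\ell\le\log d+\log(1+2\sqrt2\,\ell)$, which gives $\ell\le7\log d$ and hence $\Lip(\log f)=2\ell\le14\log d$. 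If you want to salvage your outline, replacing your target bound $\log(1/\min f)\le(d^6-1)\tls$ by a bound linear in $\ell\cdot\tls$ obtained from such a diameter estimate, and then iterating, is the missing idea.
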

As a first application, we combine this estimate with the approximate chain rule (\ref{approxlogchainrule}) to reverse the trivial inequality (\ref{onesided}), incurring a cost of order $\log d$. This provides a discrete counterpart to the celebrated equivalence $\tls=4\tmls$ that holds for Markov diffusions. 
\begin{theorem}[Quantitative equivalence between LSI and MLSI]\label{th:LSMLS}We always have
\begin{eqnarray*}
4\tmls \ \le \ \tls & \le & 15 \tmls \log d.
\end{eqnarray*}
\end{theorem}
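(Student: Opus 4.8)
The plan is to evaluate the log-Sobolev inequality (\ref{LSI}) at one of its own extremizers and exploit the regularity furnished by Theorem \ref{th: regularity of extremal}. Only the upper bound needs an argument, since $4\tmls\le\tls$ is exactly (\ref{onesided}). Because $\dX$ is finite, the supremum defining $\tls$ is attained: there is a non-constant $f\colon\dX\to(0,\infty)$ with $\Ent(f)=\tls\,\E(\sqrt f)$. (By the scaling invariance of $\Ent(f)/\E(\sqrt f)$ one may restrict to $\EE[f]=1$, and the classical argument — see e.g. \cite{diaconis} — shows that the maximizer of this ratio is interior, hence strictly positive and non-constant.)

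First I would apply Theorem \ref{th: regularity of extremal} to this $f$, obtaining $\Lip(\log f)\le 14\log d$. Since the cost function $\phi$ is non-decreasing, the approximate chain rule (\ref{approxlogchainrule}) then gives the pointwise bound $\Gamma(f,\log f)\le\phi(14\log d)\,\Gamma(\sqrt f)$ on $\dX$; integrating against $\pi$ via (\ref{Dirichlet}) yields $\E(f,\log f)\le\phi(14\log d)\,\E(\sqrt f)$. Combining this with the extremality of $f$ and the definition (\ref{MLSI}) of $\tmls$, I get
\[
\frac{\tls}{\phi(14\log d)}\,\E(f,\log f)\ \le\ \tls\,\E(\sqrt f)\ =\ \Ent(f)\ \le\ \tmls\,\E(f,\log f).
\]
Since $f$ is non-constant and $T$ is irreducible, $\E(f,\log f)=\EE[\Gamma(f,\log f)]>0$ (each summand $(f(y)-f(x))(\log f(y)-\log f(x))$ is non-negative, and they cannot all vanish by irreducibility), so dividing through gives $\tls\le\phi(14\log d)\,\tmls$.

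It then remains to check that $\phi(14\log d)\le 15\log d$ for every $d\ge 2$. Writing $\phi(r)=r\coth(r/4)$ and using that $r\mapsto\coth(r/4)$ is decreasing, the ratio $\phi(14\log d)/(14\log d)=\coth(\tfrac72\log d)$ is largest at $d=2$, where it equals $\coth(\tfrac72\log 2)<1.02$; hence $\phi(14\log d)<15\log d$, which closes the argument. I do not anticipate any genuine obstacle: the whole proof is a one-line manipulation once Theorem \ref{th: regularity of extremal} is available — which is also why it yields a short proof of the main result of \cite{MR4620718} — the only points requiring care being the existence of an extremizer and the strict positivity of $\E(f,\log f)$.
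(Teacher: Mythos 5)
The core of your argument --- apply Theorem \ref{th: regularity of extremal} to an extremizer, feed the resulting bound $\Lip(\log f)\le 14\log d$ into the approximate chain rule (\ref{approxlogchainrule}) together with (\ref{MLSI}), divide by a nonzero Dirichlet form, and check numerically that $\phi(14\log d)\le 15\log d$ --- is exactly the paper's proof, and that part is correct (your value $\coth(\tfrac72\log 2)=\tfrac{129}{127}$ matches the paper's computation, and dividing by $\E(f,\log f)>0$ rather than $\E(\sqrt f)>0$ is an immaterial variant). The genuine gap is the parenthetical existence claim. It is \emph{not} true in general that the supremum of $\Ent(f)/\E(\sqrt f)$ is attained by a strictly positive \emph{non-constant} function: after normalizing $\EE[f]=1$ the domain is bounded, but the ratio is of the form $0/0$ at the constant function, and the supremum may be approached only along sequences converging to constants. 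Linearizing around $f\equiv 1$ shows that in this scenario $\tls=2\trel$ and equality in (\ref{LSI}) holds for no non-constant function; this degenerate case actually occurs (the symmetric two-point chain is the standard example). The classical result you allude to (Lemma \ref{lm:extremizers}, i.e.\ \cite[Theorem 2.2.3]{MR1490046}) is a \emph{dichotomy} --- either a positive non-constant extremizer exists, or $\tls=2\trel=4\tmls$ --- not an unconditional existence statement, so ``the maximizer is interior, hence strictly positive and non-constant'' cannot be justified by compactness alone.

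The fix is precisely the paper's closing sentence, and it costs one line: in the degenerate branch, the second part of Lemma \ref{lm:extremizers} gives $\tls=4\tmls$, which is $\le 15\,\tmls\log d$ since $15\log 2>4$; in the other branch a positive non-constant extremizer exists and your computation goes through verbatim. With that case distinction added, your proposal coincides with the paper's proof.
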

This estimate  was the main result of the recent work \cite{MR4620718}, albeit with a slightly worst constant that ours. The proof in \cite{MR4620718} relied on a careful analysis of a regularization procedure first introduced in \cite{MR4765699}, whose effect was to improve the log-Lipschitz constant of any observable $f\colon\dX\to[0,\infty)$ to $O(\log d)$ while preserving the quantities $\Ent(f)$, $\cE(\sqrt{f})$ and $\cE(f,\log f)$ up to constant factors. Ironically, our main theorem reveals that this sophisticated procedure was in fact unnecessary, the relevant observables being already \emph{intrisically} log-Lipschitz regular with constant $O(\log d)$. As a consequence, we obtain a significantly shorter proof than the one in \cite{MR4620718}, while offering additional insight into the reason behind the appearance of the $\log d$ factor.

A less immediate, but genuinely new consequence of our main result is a sharp universal relation between the log-Sobolev constant of a discrete Markov chain and a fundamental geometric parameter known as the \emph{Bakry-\'Emery curvature}. The latter was defined in \cite{MR889476} as the largest number  $\kappa$ such that the sub-commutation property
\begin{eqnarray}
\label{curvature}\label{eq: bakry-emery}
    \Gamma P_t f & \leq & e^{-2\kappa t} P_t \Gamma f,
\end{eqnarray}
holds for all $t\ge 0$ and all functions $f\colon\dX\to\dR$. As a motivation, let us  recall what is perhaps the most emblematic application of the $\Gamma$-calculus developed by Bakry and \'Emery for the analysis of diffusions \cite{MR889476,MR1767995,MR3155209}: whenever the chain rule (\ref{eq: chain rule}) applies and $\kappa>0$, we have  
\begin{eqnarray}
\label{BELSI}
\tls & \leq & \frac{2}{\kappa}. 
\end{eqnarray}
This surprising relation between a geometric notion (the curvature) and a functional-analytic one (the log-Sobolev constant) is absolutely remarkable, and extending it beyond the context of Markov diffusions  has been an elusive goal. Here we use our main regularity estimate to bypass the lack of a chain rule and obtain the following discrete analogue of (\ref{BELSI}). 
\begin{theorem}[Curvature and LSI]\label{th:BE}When the Bakry-\'Emery curvature $\kappa$ is positive, we have
\begin{eqnarray*}
\label{approxBELSI}
\tls & \leq & \frac{33\log d}{\kappa}. 
\end{eqnarray*}
\end{theorem}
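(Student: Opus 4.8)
The plan is to transplant the Bakry--\'Emery route to (\ref{BELSI}) into the discrete world, using the approximate chain rule (\ref{approxlogchainrule}) to compensate for the missing exact chain rule and Theorem~\ref{th: regularity of extremal} to keep the resulting cost at the level $O(\log d)$. First I would reduce to an extremizer: the functional $f\mapsto\Ent(f)/\cE(\sqrt f)$ is continuous and $0$-homogeneous on $(0,\infty)^{\dX}$, hence attains its supremum $\tls$ at some $f$, which I may normalize freely since $\Ent$, $\cE(\sqrt{\cdot})$ and $\cE(\cdot,\log\cdot)$ are all $1$-homogeneous. By Theorem~\ref{th: regularity of extremal}, $L:=\Lip(\log f)\le 14\log d$, and since $\Ent(f)=\tls\,\cE(\sqrt f)$ with $\cE(\sqrt f)>0$, it suffices to prove $\Ent(f)\le\frac{33\log d}{\kappa}\,\cE(\sqrt f)$.

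Next comes the semigroup computation. The elementary (chain-rule-free) identity $\frac{d}{dt}\Ent(P_tf)=-\cE(P_tf,\log P_tf)$ together with $\Ent(P_\infty f)=0$ gives $\Ent(f)=\int_0^\infty\cE(P_tf,\log P_tf)\,\dd t$, and integrating the approximate chain rule (\ref{approxlogchainrule}) against $\pi$ yields
\begin{eqnarray*}
\Ent(f) & \le & \int_0^\infty\phi\big(\Lip(\log P_tf)\big)\,\cE(\sqrt{P_tf})\,\dd t .
\end{eqnarray*}
Two ingredients then close the estimate: (i) a regularity bound along the flow, namely $\Lip(\log P_tf)\le L\le 14\log d$ for all $t\ge0$ (or a suitable weakening that survives the time integration), so that $\phi(\Lip(\log P_tf))\le\phi(14\log d)\le 15\log d$ uniformly in $t$, using $\phi(r)=\frac{e^{r/2}+1}{e^{r/2}-1}r$ and $d\ge2$; and (ii) the exponential decay $\cE(\sqrt{P_tf})\le C_1\,e^{-2\kappa t}\,\cE(\sqrt f)$ for a universal constant $C_1$, extracted from the curvature hypothesis (\ref{curvature}). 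Granting (i)--(ii),
\begin{eqnarray*}
\Ent(f) & \le & 15C_1(\log d)\int_0^\infty e^{-2\kappa t}\,\dd t\cdot\cE(\sqrt f) \ = \ \frac{15C_1\log d}{2\kappa}\,\cE(\sqrt f),
\end{eqnarray*}
and a careful accounting of the constants (keeping $\phi(14\log d)$ rather than the crude $15\log d$, and tracking the precise value of $C_1$) is designed to land exactly on the factor $33$.

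I expect step (ii) to be the real obstacle, with (i) a supporting technical point. The delicate issue is that (\ref{curvature}) controls $\Gamma P_tf$, whereas entropy dissipation is governed by the modified energy $\cE(\sqrt{P_tf})$, and the pointwise comparison between $\Gamma h$ and $\Gamma(\sqrt h)$ --- equivalently between the carr\'e du champ and its logarithmic counterpart --- costs an exponential factor $e^{\Theta(\Lip\log h)}$, not a polynomial one; a naive substitution would therefore destroy the $\log d$ scaling. The argument must instead combine (\ref{curvature}) with the $P_t$-invariance of $\pi$ --- which already furnishes the clean Dirichlet-form decay $\cE(P_tf)\le e^{-2\kappa t}\cE(f)$ --- and exploit the a priori log-Lipschitz regularity of $f$, and of $P_tf$, in a way that never incurs more than the polynomial cost $\phi(L)=O(\log d)$; the classical \emph{local} log-Sobolev inequality of Bakry--\'Emery, with its discrete corrections handled via (\ref{approxlogchainrule}), is the natural template. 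Pinning down this mechanism, together with establishing the uniform regularity bound (i) (or replacing it by an averaged version adequate for the integral), is where the substance of the proof lies; the rest is elementary calculus and constant-chasing.
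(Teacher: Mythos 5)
Your scaffolding --- integrate the entropy dissipation $\Ent(f)=\int_0^\infty \E(P_tf,\log P_tf)\,\dd t$ along the flow, keep the regularity cost at $O(\log d)$ via Theorem~\ref{th: regularity of extremal}, and extract a factor $e^{-2\kappa t}$ from the curvature hypothesis --- is the right one, but your step (ii), the decay $\E(\sqrt{P_tf})\le C_1e^{-2\kappa t}\,\E(\sqrt f)$ with a universal constant $C_1$, is precisely the point you leave unproven, and it is the whole difficulty. The Bakry--\'Emery condition (\ref{curvature}) compares $\Gamma P_tg$ with $P_t\Gamma g$ and hence gives $\E(P_tg)\le e^{-2\kappa t}\E(g)$ for the plain Dirichlet energy; but $\sqrt{P_tf}\neq P_t\sqrt f$, and relating $\Gamma(\sqrt{P_tf})$ to anything of the form $P_t\Gamma(\sqrt f)$ is again a chain-rule statement, which in the discrete setting carries exactly the exponential cost you flag. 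So, as written, the proposal has a genuine gap: (ii) is not a known consequence of (\ref{curvature}), and establishing it with a universal constant is essentially as hard as the theorem itself. (Your step (i) in its strong form $\Lip(\log P_tf)\le L$ is also unproved; only the additive weakening $\Lip(\log P_tf)\le \Lip(\log f)+\log d$ of Lemma~\ref{lm:regularity} is available, though, as you anticipate, that weakening suffices.)

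The paper never needs any decay of $\E(\sqrt{P_tf})$. It bounds $\E(P_tf,\log P_tf)$ directly: by reversibility this equals $\E(f,P_t\log P_tf)$, which by the Cauchy--Schwarz bound of Lemma~\ref{lm:dirichlet} is at most $2\sqrt{\E(\sqrt f)\,\EE[f\,\Gamma P_t\log P_tf]}$; the curvature condition is then applied to the function $\log P_tf$ (no square roots involved), reversibility moves $P_t$ back onto $f$, and the term $\EE[(P_tf)\,\Gamma\log P_tf]$ is controlled by $(1+\Lip(\log P_tf))\,\E(P_tf,\log P_tf)$ through the approximate chain rule of Lemma~\ref{lm:approxchain}, with $\Lip(\log P_tf)\le\Lip(\log f)+\log d$ from Lemma~\ref{lm:regularity}. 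The quantity $\E(P_tf,\log P_tf)$ thus appears on both sides, and resolving this self-bounding inequality yields $\E(P_tf,\log P_tf)\le 4e^{-2\kappa t}\left(1+\log d+\Lip(\log f)\right)\E(\sqrt f)$, i.e.\ decay against the \emph{initial} energy $\E(\sqrt f)$, which is exactly what the time integration requires. A further small gap in your reduction: the existence of a non-constant extremizer on the open cone $(0,\infty)^{\dX}$ is not automatic; the paper covers the degenerate case through the second part of Lemma~\ref{lm:extremizers}, which gives $\tls=2\trel\le 2/\kappa$ and hence the claim trivially.
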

\begin{remark}[Sparse chains]For families of chains in which the sparsity parameter $d$ is uniformly bounded, Theorems \ref{th:LSMLS} and \ref{th:BE} give the same results as for diffusions, up to universal constants. This applies, in particular, to simple random walks on bounded-degree graphs.
\end{remark}
\begin{remark}[Sharpness]
As often, it is instructive to consider the rank-one case $T(x,y)=\pi(y)$, where $\pi$ is an arbitrary fully-supported probability measure. For this choice, we easily compute $\kappa=1/2$ and $d=\frac{1}{\pi_{\star}}$, where $\pi_\star$ denotes the smallest entry of $\pi$. On the other hand, it is classical that  $\tmls=\Theta(1)$ and $\tls=\Theta\left(\log \frac{1}{\pi_{\star}}\right)$, see \cite{MR1410112,MR2283379}. Thus, in this case, the reverse inequalities in Theorems \ref{th:LSMLS} and \ref{th:BE} actually also hold, albeit with different constants. 
\end{remark}
\begin{remark}[Reversibility]We have assumed throughout the paper that the transition matrix $T$ is reversible. Among other consequences, this ensures the validity of the identity (\ref{Dirichlet}), and the symmetry of the distance (\ref{def:dist}). However, some of our results have analogues in the non-reversible case. Indeed, we may replace $T$ by its additive reversibilization, defined by
\begin{eqnarray*}
\forall (x,y)\in\dX,\qquad \widehat{T}(x,y) & := & \frac{\pi(x)T(x,y)+\pi(y)T(y,x)}{2\pi(x)}.
\end{eqnarray*}
Note that the quantities $\E(\sqrt{f})$ and $\Ent(f)$ are not affected by this transformation, so that the log-Sobolev inequality (\ref{LSI}) remains the same. Thus, Theorem \ref{th: regularity of extremal} remains valid, albeit with $d$ being replaced with $\widehat{d}$. Also, the quantity $\cE(f,\log f)$ can not decrease by more than a factor $2$ upon replacing $T$ with $\widehat{T}$, so that $\widehat{\tmls}\le 2\tmls$. Consequently, Theorem \ref{th:LSMLS} remains valid in the non-reversible case, with  $\widehat{d}$ instead of $d$ and a twice larger absolute constant. 
\end{remark}

Over the past few years, there has been growing interest in developing alternative notions of curvature better suited to discrete state spaces; see, e.g., \cite{MR2484937,MR2989449,MR3701949,MR3906157,MR4253929,MR4507936} and the references therein. In particular, Ollivier \cite{MR2484937,MR2648269} proposed to replace the sub-commutation property (\ref{eq: bakry-emery}) with the condition
\begin{eqnarray}
\label{def:ricci}
\Lip(P_t f) & \le & e^{-\tilde{\kappa} t}\Lip(f),
\end{eqnarray}
which, by the classical Kantorovich-Rubinstein duality, amounts to an exponential contraction of the semi-group in the Wasserstein space $W_1(\dX)$. Since this property is often easier to verify in concrete models, it would be highly desirable to obtain a version of Theorem \ref{th:BE} featuring the Ollivier-Ricci curvature $\tilde{\kappa}$ instead of the Bakry-\'Emery curvature $\kappa$. 
\begin{conjecture}[Ollivier-Ricci curvature and LSI]\label{conjecture}There exists a universal constant $c<\infty$ such that whenever the Ollivier-Ricci curvature $\tilde{\kappa}$ is positive, we have 
\begin{eqnarray}
\tls & \leq & \frac{c\log d}{\tilde{\kappa}}. 
\end{eqnarray}
\end{conjecture}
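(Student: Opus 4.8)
This statement is a conjecture; the following is the line of attack I would pursue. One should \emph{not} try to deduce it from Theorem~\ref{th:BE} by comparing curvatures: in general the Ollivier-Ricci curvature $\tilde\kappa$ and the Bakry-\'Emery curvature $\kappa$ are not comparable (there exist reversible chains on which $\tilde\kappa$ is positive while $\kappa$ is not), so Theorem~\ref{th:BE} cannot be used as a black box. The natural plan is to route through the modified inequality via Theorem~\ref{th:LSMLS} instead: since $\tls\le 15\,\tmls\log d$ holds unconditionally, it suffices to prove the curvature bound for the \emph{modified} log-Sobolev constant, namely
\[
\tilde\kappa>0\quad\Longrightarrow\quad\tmls\ \le\ \frac{c_0}{\tilde\kappa}
\]
for a universal $c_0<\infty$; the conjecture then follows with $c=15\,c_0$. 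Note that the single factor $\log d$ is exactly the toll of this reduction, and that it is the right order: for Poincar\'e's inequality a positive Ollivier-Ricci curvature already yields the sharp bound $\trel\le 1/\tilde\kappa$, while the rank-one example discussed above shows that $\tls$ can exceed $\tmls$ by a genuine $\log d$ factor.

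To establish $\tmls=O(1/\tilde\kappa)$, I would attempt a Bakry-\'Emery-type argument along the semi-group, carried out in the $W_1$ geometry that $\tilde\kappa$ naturally controls rather than in the $L^2$ one. Writing $\Lambda(t):=\Ent(P_tf)$, we have $\Lambda'(t)=-\cE(P_tf,\log P_tf)$, so the goal is to dominate the entropy production $\cE(P_tf,\log P_tf)$ by some functional that visibly decays at rate $\tilde\kappa$ under $(P_t)_{t\ge 0}$; the hypothesis $\Lip(P_tf)\le e^{-\tilde\kappa t}\Lip(f)$ --- i.e., exponential contraction in $W_1(\dX)$ by Kantorovich-Rubinstein duality --- is the obvious handle. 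A preliminary step is to reduce, via the regularization procedure of \cite{MR4765699}, to observables $f$ for which $\log f$ is already $O(\log d)$-Lipschitz, at the cost of only universal constants in $\Ent(f)$, $\cE(\sqrt f)$ and $\cE(f,\log f)$; for such $f$ the approximate chain rule~(\ref{approxlogchainrule}) is an equality up to a factor $\phi(O(\log d))=O(\log d)$, and $f$ changes by at most a factor $d^{O(1)}$ across each edge, so that a path-coupling of the chains started at adjacent states has a chance to turn the $W_1$-contraction into a decay estimate for $\cE(P_tf,\log P_tf)$. A more hands-on variant would apply Theorem~\ref{th: regularity of extremal} directly to an LSI extremizer and bound $\Ent(f)$ against $\cE(\sqrt f)$ by coupling, exploiting the intrinsic $O(\log d)$ log-Lipschitz regularity that it provides; I expect both variants to run into the same obstruction.

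That obstruction --- and the reason the statement is recorded only as a conjecture --- is the absence of a discrete Bochner/$\Gamma_2$ identity adapted to a $W_1$-contraction hypothesis. From $\tilde\kappa>0$ one gets for free a spectral gap ($\trel\le 1/\tilde\kappa$, Ollivier \cite{MR2484937}) and a transport-entropy inequality of Talagrand $T_1$ type with the associated sub-Gaussian concentration; but a $T_1$ inequality is genuinely weaker than an MLSI, and I see no way to upgrade it without an additional ingredient. Every implication from a coarse curvature lower bound to an MLSI that I am aware of either imposes extra structural assumptions on the chain, or loses at least one further logarithmic factor in $d$ --- which, combined with Theorem~\ref{th:LSMLS}, would only give $\tls=O((\log d)^2/\tilde\kappa)$, short of the conjecture. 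Obtaining a curvature-to-MLSI implication that is \emph{lossless} in $d$, equivalently a second-order control of $t\mapsto\cE(P_tf,\log P_tf)$ extracted purely from $W_1$-contraction, is where the real difficulty lies.
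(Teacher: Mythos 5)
This statement is a conjecture: the paper contains no proof of it, and you rightly do not claim one. But the main route you propose is not merely difficult --- its key intermediate step is known to be \emph{false}. Reducing via Theorem~\ref{th:LSMLS} to the unconditional bound $\tmls\le c_0/\tilde\kappa$ is exactly the Peres--Tetali prediction, which was recently disproved by the counterexample of M\"unch \cite{münch2023olliviercurvatureisoperimetryconcentration}; the paper explicitly frames Conjecture~\ref{conjecture} as a \emph{replacement} for that disproved prediction, and notes that the counterexample does not contradict the conjecture precisely because of the extra $\log d$ factor. So a Bakry--\'Emery-type argument along the semi-group aimed at proving $\tmls=O(1/\tilde\kappa)$ in full generality cannot succeed, no matter how the $W_1$-contraction is exploited: the obstruction is not a missing Bochner/$\Gamma_2$ identity but an actual counterexample. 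Any viable strategy must spend the $\log d$ budget \emph{inside} the curvature-to-functional-inequality step (i.e.\ prove something like $\tmls\le c\log d/\tilde\kappa$ directly, or attack $\tls$ without passing through an unconditional MLSI), rather than pay it only in the final LSI/MLSI comparison.

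Your reduction is sound in the restricted setting where Peres--Tetali is known, namely under the additional assumption of non-negative sectional curvature \cite{münch2023olliviercurvatureisoperimetryconcentration,caputo2024entropycurvatureperestetaliconjecture}; combined with Theorem~\ref{th:LSMLS} this does give the conjecture for such chains, as the paper records. Your side remarks are otherwise reasonable: $\tilde\kappa$ and $\kappa$ are indeed not comparable in general, so Theorem~\ref{th:BE} cannot be used as a black box, and $\trel\le 1/\tilde\kappa$ together with the rank-one example correctly calibrates the expected $\log d$ loss. One smaller caveat: the regularization step you invoke from \cite{MR4765699} (and used in \cite{MR4620718}) preserves $\Ent$, $\cE(\sqrt f)$ and $\cE(f,\log f)$ up to constants, but it does not obviously interact well with the hypothesis $\Lip(P_tf)\le e^{-\tilde\kappa t}\Lip(f)$, since the regularized observable is no longer of the form $P_t g$ for the same chain; so even the preliminary reduction to $O(\log d)$-log-Lipschitz observables would need justification in your framework.
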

In light of Theorem \ref{th:LSMLS}, the validity of this conjecture would follow from a celebrated prediction attributed to Peres and Tetali (see also \cite[Problem M]{MR2648269}), according to which
\begin{eqnarray*}
\label{PeresTetali}
\tmls & \leq & \frac{c}{\tilde{\kappa}},
\end{eqnarray*}
whenever $\tilde{\kappa}>0$, where $c$ is an absolute constant. Unfortunately, this prediction was recently disproved \cite{münch2023olliviercurvatureisoperimetryconcentration}, and Conjecture \ref{conjecture} seems to constitute a fairly reasonable replacement for it. Notably, the counter-example provided in \cite{münch2023olliviercurvatureisoperimetryconcentration} does not contradict our conjecture.  Interestingly, the Peres-Tetali conjecture was verified under an additional assumption known as non-negative sectional curvature \cite{münch2023olliviercurvatureisoperimetryconcentration,caputo2024entropycurvatureperestetaliconjecture}.  By virtue of Theorem \ref{th:LSMLS}, the validity of Conjecture \ref{conjecture} is guaranteed for such chains. However, non-negative sectional curvature is a rather restrictive assumption, whose removal  would be of substantial practical interest.

\section{Proofs}

\subsection{Proof of Theorem \ref{th: regularity of extremal}}
Let us start by equipping our state space $\dX$ with a natural metric, namely the graph distance induced by the set of allowed transitions:
\begin{eqnarray}
\label{def:dist}
\dist(x,y) & := & \min\left\{n\in\mathbb N\colon T^n(x,y)>0\right\}.
\end{eqnarray}
As usual, we let $\diam:=\max_{x,y\in\dX}\dist(x,y)$ denote the associated diameter. It turns out that the latter is directly controlled by the inverse log-Sobolev constant. 
\begin{lemma}[Diameter and LSI]\label{lem: bound diameter}
 We always have 
$
    \diam  \leq \sqrt{2}\tls.
$
\end{lemma}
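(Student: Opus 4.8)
The plan is to reduce the estimate to a bound on the oscillation of a single $1$-Lipschitz observable, and to obtain that bound by exploiting the log-Sobolev inequality (\ref{LSI}) at all exponential scales at once. Fix $x_0,y_0$ with $\dist(x_0,y_0)=\diam$ and set $g:=\dist(x_0,\cdot)$, which is $1$-Lipschitz for the metric (\ref{def:dist}) — reversibility makes $\dist$ symmetric, hence a genuine metric — and satisfies $0=g(x_0)\le g\le g(y_0)=\diam$. I would first record the elementary bound $\pi(x)\ge e^{-\tls}$ for every $x\in\dX$: plugging (a positive approximation of) the indicator $f=\mathbf 1_{\{x\}}$ into (\ref{LSI}) gives $\pi(x)\log\tfrac1{\pi(x)}=\Ent(f)\le\tls\,\cE(\sqrt f)=\tls\,\pi(x)\bigl(1-T(x,x)\bigr)\le\tls\,\pi(x)$; in particular $\pi(x_0)\wedge\pi(y_0)\ge e^{-\tls}$.

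Next comes the core step: a differential inequality for the log-Laplace transform $H(\lambda):=\log\EE[e^{\lambda g}]$, valid for every $\lambda>0$. With $h:=e^{\lambda g/2}$, the algebraic identity $(e^a-e^b)^2=2e^{a+b}\bigl(\cosh(a-b)-1\bigr)$ and the bound $|g(x)-g(y)|\le 1$ whenever $T(x,y)>0$ give the pointwise estimate $\bigl(h(y)-h(x)\bigr)^2\le 2\bigl(\cosh\tfrac\lambda2-1\bigr)h(x)h(y)$, whence $\cE(h)\le\bigl(\cosh\tfrac\lambda2-1\bigr)\EE[h\,Th]$, where $Th:=\sum_y T(\cdot,y)h(y)$. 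Combining this with the standard identity $\cE(h)=\EE[h^2]-\EE[h\,Th]$ (i.e.\ $\cE(h)=-\EE[h\,\LL h]$) and solving for $\cE(h)$ produces the clean bound $\cE(e^{\lambda g/2})\le\bigl(1-\tfrac1{\cosh(\lambda/2)}\bigr)\EE[e^{\lambda g}]$. Since $\Ent(e^{\lambda g})/\EE[e^{\lambda g}]=\lambda H'(\lambda)-H(\lambda)$, the log-Sobolev inequality becomes $\lambda H'(\lambda)-H(\lambda)\le\tls\bigl(1-\tfrac1{\cosh(\lambda/2)}\bigr)$, equivalently $\bigl(H(\lambda)/\lambda\bigr)'\le\tls\bigl(1-\tfrac1{\cosh(\lambda/2)}\bigr)/\lambda^{2}$. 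Integrating from $0$, where $H(\lambda)/\lambda\to\EE[g]$, gives $H(\lambda)\le\lambda\,\EE[g]+\lambda\,\tls\,\Psi(\lambda)$ with $\Psi(\lambda):=\int_0^\lambda \frac{1-1/\cosh(s/2)}{s^{2}}\dd s$.

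To conclude, use the mass at $y_0$ to bound $H(\lambda)\ge\log\bigl(\pi(y_0)e^{\lambda\diam}\bigr)\ge\lambda\diam-\tls$, so that $\diam-\EE[g]\le\tls/\lambda+\tls\Psi(\lambda)$; running the identical argument for the (also $1$-Lipschitz) function $\diam-g$, which equals $\diam$ at $x_0$, gives the symmetric inequality $\EE[g]\le\tls/\lambda+\tls\Psi(\lambda)$. Adding the two and letting $\lambda\to\infty$ — the integral $\Psi(\infty)$ converges because its integrand is $O(1)$ near $0$ and $O(s^{-2})$ near $\infty$ — yields $\diam\le 2\tls\,\Psi(\infty)$, and a direct numerical check gives $\Psi(\infty)<1/\sqrt 2$, hence $\diam\le\sqrt2\,\tls$.

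The step I expect to be the genuine obstacle is recognizing that one must use (\ref{LSI}) at all scales simultaneously: the single-test-function (``isoperimetric'') consequences of the log-Sobolev inequality, such as $\pi(A)\log\tfrac1{\pi(A)}\le\tls\,\cE(\mathbf 1_A)$, degenerate as $\pi(A)\to0$ or $1$ and only yield $\diam=O(\tls^{2})$. The technical device that pushes the scale-by-scale estimate down to the sharp linear order is the passage from $\cosh(\lambda/2)-1$ to $1-1/\cosh(\lambda/2)<1$ via $\cE(h)=\EE[h^2]-\EE[h\,Th]$, which is exactly what keeps $\Psi(\infty)$ finite, so that the optimization over $\lambda$ can be carried all the way to $\lambda=\infty$.
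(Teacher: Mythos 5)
Your proof is correct, and it takes a genuinely different route from the paper's. The paper invokes the Herbst argument through the \emph{modified} log-Sobolev inequality (citing an external concentration lemma): sub-Gaussian concentration for $1$-Lipschitz observables gives $(\max f)^2\le 2\tmls\log\frac1{\pi_\star}$, which is then converted using $4\tmls\le\tls$ and the Dirac-mass bound $\log\frac1{\pi_\star}\le\tls$. You instead run a self-contained Herbst-type argument directly on the LSI: the key point, which you correctly identify, is that solving $\cE(h)\le(\cosh\frac\lambda2-1)\EE[h\,Th]$ against $\cE(h)=\EE[h^2]-\EE[h\,Th]$ yields the \emph{saturating} bound $\cE(e^{\lambda g/2})\le\bigl(1-\tfrac1{\cosh(\lambda/2)}\bigr)\EE[e^{\lambda g}]$, so the differential inequality for $H(\lambda)/\lambda$ can be integrated all the way to $\lambda=\infty$; combined with the same Dirac-mass ingredient $\pi_\star\ge e^{-\tls}$ (which both proofs share), this gives $\diam\le 2\Psi(\infty)\tls$. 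Your approach buys two things: it bypasses both the MLSI and the cited concentration lemma, and it actually produces a slightly better constant, since $\Psi(\infty)=\tfrac12\int_0^\infty\frac{1-1/\cosh u}{u^2}\dd u\approx 0.583$, i.e.\ $\diam\le 1.17\,\tls$. One small polish: your ``direct numerical check'' can be replaced by the elementary estimate $1-\tfrac1{\cosh(s/2)}\le\min\bigl(\tfrac{s^2}8,1\bigr)$ (using $\cosh u\le e^{u^2/2}$), which gives $\Psi(\infty)\le\inf_{a>0}\bigl(\tfrac a8+\tfrac1a\bigr)=\tfrac1{\sqrt2}$ and thus recovers the stated constant $\sqrt2$ exactly, with no numerics needed. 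The paper's proof, by contrast, is shorter on the page because it outsources the Herbst machinery to a reference, and it highlights the role of $\tmls$; the two arguments are otherwise of the same Herbst family.
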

Rather surprisingly, this simple observation seems to be new. Diameter bounds involving the log-Sobolev constant first appeared in the different context of compact manifolds with Ricci curvature bounded from below \cite{MR1292948,MR1307413,MR1346225}. Discrete versions that apply to our setup are discussed at length in Section 7.4 of the lecture notes \cite{MR1767995}. However, to the best of our knowledge,  all those bounds suffer from an additional dependency in one of the dimension parameters $|\dX|$, $d$, or $\pi_\star$. See, for example, \cite[Proposition 7.6]{MR1767995} and the estimate $\diam\le 64\tls\log d$ deduced from it, as well as its refinement $\diam\le 32\tmls\log d$ recently recorded in \cite{pedrotti2025newcutoffcriterionnonnegatively}. By virtue of Theorem \ref{th:LSMLS}, our estimate in Lemma \ref{lem: bound diameter} is always better, and the improvement can in fact be substantial: for example, in the emblematic case of simple random walk on the $n-$dimensional hypercube, the diameter estimate in Lemma \ref{lem: bound diameter} is sharp except for the $\sqrt{2}$ pre-factor, while the estimate $\diam\le 32\tmls\log d$ is off by a factor of order $\log n$. This example also demonstrates that our estimate is sometimes much better than the classical diameter bound using mixing times (see, e.g., \cite[Chapter 7]{MR3726904}).  

\begin{proof}[Proof of Lemma~\ref{lem: bound diameter}]
By virtue of a classical argument due to Herbst, the modified log-Sobolev inequality guarantees sub-Gaussian concentration under the stationary measure: for
any function $f\colon\dX \to \dR$ with $\EE[f]=0$ and $\Lip(f) \le 1$, and any $t \ge 0$, we have
\begin{eqnarray*}
\PP(f\ge t) & \le & e^{-\frac{t^2}{2\tmls}}.
\end{eqnarray*}
We refer the reader to the  lecture notes \cite[Section 2.3]{MR1767995} for a general presentation, and to \cite[Lemma 15]{MR4203344} for the precise discrete version used here. 
Choosing $t=\max f$ and noting that the left-hand side is at least $\pi_\star:=\min_{x}\pi(x)$, we obtain
\begin{eqnarray*}
(\max f)^2 & \le & 2\tmls\log\frac{1}{\pi_\star}\\
& \le & \frac{\tls^2}{2},
\end{eqnarray*}
where the second line uses (\ref{onesided}), as well as the bound $\log\frac{1}{\pi_\star}\leq \tls$ obtained by applying the log-Sobolev inequality (\ref{LSI}) to Dirac masses. Replacing $f$ with $-f$ yields the same estimate for $\min f$, and combining the two leads to
\begin{eqnarray*}
\max f-\min f & \le & \sqrt{2}\tls.
\end{eqnarray*}
At this point, our earlier requirement that $\pi(f)=0$ can simply be dropped, because the conclusion is invariant under shifting $f$ by a constant. In particular, we may take $f(x)=\dist(o,x)$ for any fixed state $o\in\dX$, and the claim readily follows.
\end{proof}
The second ingredient in the proof of Theorem \ref{th: regularity of extremal} is the following classical structural result,  stated and proved for instance  in the lecture notes \cite[Theorem 2.2.3]{MR1490046}, and which asserts that extremizers  in the log-Sobolev inequality (\ref{LSI}) must satisfy a specific functional equation, much like extremizers in the Poincaré inequality satisfy an eigenvalue equation. Both those results are, in fact, particular instances of Fermat's theorem, which asserts that the differential of a smooth function must vanish at interior extremizers. 
\begin{lemma}[Structure of log-Sobolev extremizers]\label{lm:extremizers}If a function $f\colon\dX\to(0,\infty)$ achieves equality in the log-Sobolev inequality (\ref{LSI}), then the function $g:=\sqrt{\frac{f}{\EE[f]}}$ must solve
\begin{eqnarray}
\label{local}
\tls \LL g + 2g\log g & = & 0.
\end{eqnarray}
Moreover, if there is no non-constant function achieving  equality in  (\ref{LSI}), then $\tls=2\trel=4\tmls$, where $\trel$ denotes the relaxation time (inverse spectral gap) of $\LL$.
\end{lemma}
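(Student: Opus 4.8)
The plan is to derive both assertions from the variational characterization $\tls=\sup\{\,\Ent(f)/\E(\sqrt f):f\colon\dX\to(0,\infty),\ f\ \text{non-constant}\,\}$, which is immediate from the definition of $\tls$ as the optimal constant in~(\ref{LSI}), together with the scale-invariance of this ratio (both $\Ent$ and $\E(\sqrt{\,\cdot\,})$ being $1$-homogeneous under $f\mapsto cf$). Writing $f=g^2$ with $g>0$, so that $\E(\sqrt{g^2})=\E(g)$, and normalizing $\EE[g^2]=1$, a strictly positive extremizer $f$ corresponds to a point $g=\sqrt{f/\EE[f]}$ at which the smooth functional $g\mapsto\Ent(g^2)/\E(g)$ attains its maximum over the smooth submanifold $M:=\{g>0:\EE[g^2]=1\}$. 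Fermat's theorem on $M$ then produces a Lagrange identity $\nabla\Ent(g^2)-\tls\,\nabla\E(g)=\mu\,\nabla\EE[g^2]$ for some $\mu\in\dR$. Here reversibility yields $\partial_{g(x)}\E(g)=-2\pi(x)(\LL g)(x)$, while $1$-homogeneity of $f\mapsto\Ent(f)$ in $f=g^2$ gives $\partial_{g(x)}\Ent(g^2)=2\pi(x)g(x)\log\!\big(g(x)^2/\EE[g^2]\big)$, which on $M$ equals $4\pi(x)g(x)\log g(x)$; so the identity reads pointwise as $\tls(\LL g)(x)+2g(x)\log g(x)=\mu\,g(x)$.

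To pin down the multiplier I would pair this relation with $\pi\,g$ and sum over $\dX$: the left-hand side becomes $2\EE[g^2\log g]+\tls\,\EE[g\LL g]=\Ent(g^2)-\tls\,\E(g)$ (using $\EE[g\LL g]=-\E(g)$ and $\EE[g^2]=1$), while the right-hand side equals $\mu\,\EE[g^2]=\mu$; since $\Ent(g^2)=\tls\,\E(g)$ at an extremizer, the left-hand side vanishes, forcing $\mu=0$, which is exactly~(\ref{local}). If the extremizer is constant then $g\equiv1$ solves~(\ref{local}) trivially, so nothing more is needed.

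For the second assertion, suppose no non-constant $f\colon\dX\to(0,\infty)$ achieves equality. I would first rule out maximizers with zeros: if some non-constant $f_\star\ge0$ attained the supremum and vanished somewhere, one may pick a state $x_0$ at the boundary of its (proper) zero set — connectedness of the transition graph guarantees one exists — and perturb along $f_\star+t\mathbf{1}_{x_0}$; a direct computation gives $\frac{d}{dt}\Ent(f_\star+t\mathbf{1}_{x_0})\sim\pi(x_0)\log t$ but $\frac{d}{dt}\E\big(\sqrt{f_\star+t\mathbf{1}_{x_0}}\big)\sim-c\,t^{-1/2}$ with $c>0$ as $t\downarrow0$, so the ratio $\Ent/\E(\sqrt{\,\cdot\,})$ is strictly increasing near $t=0$, contradicting maximality. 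Hence, under our assumption, the supremum is attained by no non-constant $f\ge0$. On the other hand, expanding about the constant function, $f=1+h$ with $\EE[h]=0$ gives $\Ent(1+h)=\tfrac12\Var(h)+O(\|h\|^3)$ and $\E(\sqrt{1+h})=\tfrac14\E(h)+O(\|h\|^3)$; since the Poincaré inequality provides $\E(h)\ge\trel^{-1}\Var(h)$, the cubic remainders are negligible against these quadratic leading terms, so $\Ent(1+h)/\E(\sqrt{1+h})=2\,\Var(h)/\E(h)\,\big(1+O(\|h\|)\big)$, and the supremum of $2\,\Var(h)/\E(h)$ over non-constant $h$ is $2\trel$ by the Rayleigh characterization of the spectral gap. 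Combining both facts through a compactness argument on the compact set $\{f\ge0:\EE[f]=1\}$: either $\tls=2\trel$, or every maximizing sequence stays bounded away from the constant (values near it being $\le2\trel(1+o(1))$), subsequentially converges to a non-constant $f_\star\ge0$, and — by continuity of $\Ent/\E(\sqrt{\,\cdot\,})$ there — yields a non-constant maximizer, which we have excluded; so $\tls=2\trel$ in all cases. Finally $4\tmls\le\tls$ by~(\ref{onesided}), and $4\tmls\ge2\trel$ by the analogous linearization of~(\ref{MLSI}) (which yields the Poincaré inequality with constant $2\tmls$), whence $\tls=2\trel=4\tmls$.

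I expect the main obstacle to be the dichotomy in the second part rather than the computation in the first, which is essentially bookkeeping once the scale-invariant normalization is fixed. Excluding extremizers supported on proper subsets hinges on the mismatch between the $\log t$ and $t^{-1/2}$ blow-up rates of the two derivatives, and identifying the limiting ratio $2\trel$ as $f\to\mathrm{const}$ requires precisely the quantitative bound $\E(h)\ge\trel^{-1}\Var(h)$ from the spectral gap to render the Taylor remainders harmless.
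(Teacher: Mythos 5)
Your argument is essentially correct, but note that the paper does not actually prove this lemma: it is invoked as a classical structural result, quoted from Saloff-Coste's lecture notes \cite{MR1490046} (Theorem 2.2.3), with the additional identity $\tls=2\trel=4\tmls$ in the degenerate case following from the standard sandwich $2\trel\le 4\tmls\le \tls$ (linearization of (\ref{MLSI}) plus (\ref{onesided})), exactly as you do at the end. What you have written is in effect a self-contained reconstruction of that classical Rothaus/Diaconis--Saloff-Coste argument: the Euler--Lagrange equation from Fermat's theorem on the sphere $\EE[g^2]=1$, with the Lagrange multiplier identified as $0$ by pairing the relation with $g$ and using $\Ent(g^2)=\tls\,\E(g)$; the exclusion of non-constant nonnegative maximizers with zeros by perturbing at a boundary point $x_0$ of the zero set, where the $t^{-1/2}$ blow-up of $\tfrac{d}{dt}\E(\sqrt{\cdot})$ overwhelms the $\log t$ blow-up of $\tfrac{d}{dt}\Ent$; and the identification of the value $2\trel$ in the near-constant regime by second-order expansion, made uniform via the Poincar\'e inequality. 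The computations you sketch (the gradients $-2\pi(x)(\LL g)(x)$ and $4\pi(x)g(x)\log g(x)$, the expansions $\Ent(1+h)=\tfrac12\Var(h)+O(\|h\|^3)$ and $\E(\sqrt{1+h})=\tfrac14\E(h)+O(\|h\|^3)$) all check out. Three points deserve to be made explicit rather than implicit: (i) since $\tls$ is defined via strictly positive $f$, you must observe that the inequality and the ratio extend by continuity to nonnegative non-constant $f$ (so the subsequential limit $f_\star$ is admissible, and the perturbed function, which may still vanish at other states, cannot exceed $\tls$); (ii) the dichotomy ``$\tls\le2\trel$ hence $=2\trel$'' uses $\tls\ge 2\trel$, which your linearization does furnish but which you never state; (iii) the constant $c>0$ in the perturbation comes precisely from $x_0$ having a neighbour in the support of $f_\star$, guaranteed by irreducibility. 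None of these is a genuine gap; they are bookkeeping items that a complete write-up should include.
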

We now have all we need to prove Theorem~\ref{th: regularity of extremal}.
\begin{proof}[Proof of Theorem~\ref{th: regularity of extremal}]Assume that $f\colon\dX\to(0,\infty)$ achieves equality in  (\ref{LSI}). By Lemma \ref{lm:extremizers}, the function $g:=\sqrt{\frac{f}{\EE[f]}}$ must then solve (\ref{local}). Now, consider a pair of states $(x,y)\in \dX^2$ realizing the definition of $\Lip(\log g)$. In other words, $T(x,y)>0$ and 
\begin{eqnarray*}
\log\left(\frac{g(y)}{g(x)}\right) & = & \Lip(\log g) \ := \ \ell.
\end{eqnarray*}
Recalling the definition of $d$ at (\ref{def:d}), we have in particular
\begin{eqnarray*}
\ell & \le & \log d+\log\left(\sum_{z\in\dX}T(x,z)\frac{g(z)}{g(x)}\right).
\end{eqnarray*}
On the other hand, evaluating (\ref{local}) at  $x$ and dividing through by $g(x)$, we know that
\begin{eqnarray*}
\sum_{z\in\dX}T(x,z)\frac{g(z)}{g(x)} & = & 1-\frac{2}{\tls}\log g(x).
\end{eqnarray*}
But $\EE[g^2]=1$ by construction, so $\max g\geq 1$, and we may thus write
\begin{eqnarray*}
-\log g(x) & \le & \max(\log g)-\min(\log g) \\
& \le & \ell\diam \\
& \le & \sqrt{2}\ell\tls,
\end{eqnarray*}
where the last line uses Lemma \ref{lem: bound diameter}. Combining the last three displays, we arrive at
\begin{eqnarray*}
\ell & \le & \log d+\log \left(1+2\sqrt{2}\ell\right)\\
& \le & \log d +\log\left(1+\frac{\ell}2\right)+\frac{5}{2}\log 2\\
& \le & \frac{\ell+7\log d}2,
\end{eqnarray*}
where the last line uses $\log (1+u)\le u$ and the fact that $d\ge 2$. Thus, we obtain
\begin{eqnarray*}
\ell & \le & 7\log d,
\end{eqnarray*}
which concludes the proof since $\Lip(\log f)=2\Lip(\log g)=2\ell$. 
\end{proof}
\subsection{Proof of  Theorem \ref{th:LSMLS}}
Let us now turn to the proof of Theorem \ref{th:LSMLS}. Consider a function $f\colon\dX\to(0,\infty)$ which achieves equality in the log-Sobolev inequality (\ref{LSI}). Then, 
\begin{eqnarray*}
\tls\E(\sqrt{f}) & = & \Ent(f)\\
& \le & \tmls \E(f,\log f)\\
& \le & \tmls\phi(\Lip(\log f))\E(\sqrt{f}),
\end{eqnarray*}
where we have successively used the modified log-Sobolev inequality (\ref{MLSI}) and the approximate chain rule (\ref{approxlogchainrule}). If $f$ is not constant, we may safely simplify through by $\E(\sqrt{f})$ to obtain 
\begin{eqnarray*}
\tls  & \le & \tmls\phi(\Lip(\log f)).
\end{eqnarray*}
This is enough to conclude, since by Theorem \ref{th: regularity of extremal} and the fact that $\phi$ is increasing, we have
\begin{eqnarray*}
\phi(\Lip(\log f))&  \le & \phi(14\log d)\\
& \le & 15\log d,
\end{eqnarray*}
where in the second line we have observed that $\frac{\phi(r)}{r}=\frac{e^{r/2+1}}{e^{r/2}-1}$ is a decreasing function of $r$, which equals $\frac{129}{127}\le\frac{15}{14}$ when $r=14\log 2$. Finally, in the degenerate case where there is no non-constant function  $f\colon\dX\to(0,\infty)$ achieving equality in the log-Sobolev inequality (\ref{LSI}), the second part of Lemma \ref{lm:extremizers} ensures that $\tls=4\tmls$, which is more than enough to conclude. 

\subsection{Proof of Theorem \ref{th:BE}}
We will need three lemmas. The first one is a  uniform control on the log-Lipschitz constant of $P_t f$ in terms of that of $f$, for any positive observable $f$ and any time $t\ge 0$. 
\begin{lemma}[Regularity along the semi-group]\label{lm:regularity} For any $f\colon\dX\to(0,\infty)$ and $t\ge 0$, we have
\begin{eqnarray*}
\Lip(\log P_tf) &  \le & \Lip(\log f)+\log d.
\end{eqnarray*}
\end{lemma}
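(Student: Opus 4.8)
The plan is to track how the log-ratio $\log(P_tf(y)/P_tf(x))$ evolves along a pair of adjacent states and to control it uniformly in $t$. First I would fix a pair $(x,y)$ with $T(x,y)>0$, so that the transition $x\to y$ occurs with probability at least $1/d$. The key structural observation is that for \emph{any} $f\colon\dX\to(0,\infty)$ one has the pointwise bound
\begin{eqnarray*}
P_tf(y) & \le & e^{\Lip(\log f)}\, P_tf(x) \;+\; \big(\text{correction involving the transition }x\to y\big),
\end{eqnarray*}
but this is too crude; instead I would work directly at the level of the semigroup identity $\frac{d}{dt}P_tf=P_t\LL f=\LL P_tf$ and aim to show that the function $u(t):=\Lip(\log P_tf)$ satisfies a differential inequality that keeps it below $\Lip(\log f)+\log d$ for all $t$.

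The cleaner route, which I expect to be the one used, is a coupling/comparison argument: for a fixed source state, $P_tf(\cdot)$ is a convex combination $\sum_z p_t(\cdot,z)f(z)$, and the transition kernel from $x$ can be compared to that from $y$. Concretely, since $T(x,y)>0$, with probability at least $1/d$ the chain started at $x$ jumps immediately to $y$, so
\begin{eqnarray*}
P_tf(x) & \ge & \frac{1}{d}\,P_tf(y),
\end{eqnarray*}
giving $\log P_tf(y)-\log P_tf(x)\le\log d$ \emph{for free} — but this only yields one-sided control of size $\log d$ with no reference to $\Lip(\log f)$ at all, and of course the reverse direction needs the regularity of $f$. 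So the argument must be more careful: I would split $\log P_tf(y)-\log P_tf(x)$ and use that $P_tf(y)\le\big(\text{prob.\ of immediate jump}\big)\cdot(\text{contribution})+\big(\text{rest}\big)$, where the "rest" is dominated by $e^{\Lip(\log f)}$ times the corresponding quantity at $x$ via a pathwise comparison of the walks from $y$ and from $x$ after one step. Taking the logarithm and using concavity of $\log$ to absorb the two pieces additively is exactly what produces the sum $\Lip(\log f)+\log d$.

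The main obstacle is making the pathwise comparison between the walk started at $x$ and the walk started at $y$ rigorous while the extra factor $\log d$ (rather than something depending on more of the geometry) comes out. I expect the trick is to use the single allowed transition $x\to y$ to "synchronize" the two walks after one step, i.e.\ to write a coupling in which, from time $0^+$ onwards, the walk from $x$ either sits at $x$ and then behaves like the walk from $y$ needs one extra step, or has already moved; controlling the overcount from that single step is where the $\log d$ enters, and controlling the post-synchronization discrepancy is where $\Lip(\log f)$ enters. Once the pointwise inequality $P_tf(y)\le d\,e^{\Lip(\log f)}P_tf(x)$ (say) is established uniformly in $t$ and in the adjacent pair, taking logs and maximizing over adjacent pairs finishes the proof; the symmetric inequality follows by exchanging the roles of $x$ and $y$, which is legitimate since reversibility guarantees $T(y,x)>0$ as well.
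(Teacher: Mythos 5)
There is a genuine gap, on two levels. First, the inequality you claim to get ``for free'', $P_tf(x)\ge \frac1d P_tf(y)$, is simply false in general: in continuous time the jump from $x$ to $y$ is not ``immediate'' but consumes a (random) amount of time, so the honest comparison carries a shift, e.g.\ $T^nf(x)\ge T(x,y)\,T^{n-1}f(y)$ with the step index lowered by one, not a comparison of $T^nf(x)$ with $T^nf(y)$ at the same $n$. Concretely, on the path $x\sim y\sim w$ with simple random walk (so $d=2$), take $f(x)=f(y)=1$ and $f(w)=M$ large: for small $t$ one has $P_tf(y)\approx 1+tM/2$ while $P_tf(x)\approx 1+t^2M/4$, so $P_tf(y)/P_tf(x)$ can be made arbitrarily large. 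Any bound at a fixed time must invoke the regularity of $f$ to absorb this shift.

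Second, and more importantly, the argument you actually intend is never carried out: the target inequality $P_tf(x)\le d\,e^{\Lip(\log f)}P_tf(y)$ is announced as what ``must'' come out of a coupling that synchronizes the two walks after one step, but the crucial point --- how to handle the one-step (one unit of time) mismatch between the synchronized trajectories --- is exactly what is left unexplained, and it is not a technicality: it is where $\Lip(\log f)$ enters. The paper resolves it by working with the discrete powers $T^n$: writing $\ell:=\Lip(\log f)$, the log-Lipschitz bound applied to the \emph{last} step gives $T^nf(x)\le e^{\ell}\,T^{n-1}f(x)$, while the allowed transition $y\to x$ gives $T^nf(y)\ge T(y,x)\,T^{n-1}f(x)\ge d^{-1}T^{n-1}f(x)$; combining yields $T^nf(x)\le d\,e^{\ell}\,T^nf(y)$ at the \emph{same} $n$ (also for $n=0$), and averaging over $n$ with the Poisson$(t)$ weights gives $P_tf(x)\le d\,e^{\ell}P_tf(y)$, hence the lemma. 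Your sketch attributes $\ell$ to a ``post-synchronization discrepancy'', but after synchronization the trajectories agree; the discrepancy is precisely the step shift, and without the observation $T^nf(x)\le e^{\ell}T^{n-1}f(x)$ (or an equivalent device) the argument does not close. Note also that concavity of $\log$ plays no role: the additive form $\ell+\log d$ is just the logarithm of the multiplicative bound $d\,e^{\ell}$.
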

\begin{proof}
Fix $f\colon\dX\to(0,\infty)$, and write $\ell:=\Lip(\log f)$. For any $x\in\dX$ and any $n\ge 1$, we have
\begin{eqnarray*}
T^{n}f(x) &  = & \sum_{z,w\in\dX}T^{n-1}(x,z)T(z,w)f(w)\\
& \le & \sum_{z,w\in\dX}T^{n-1}(x,z)T(z,w)f(z)e^{\ell}\ = \ e^{\ell} T^{n-1}f(x),
\end{eqnarray*}
by definition of $\ell$. On the other hand, for any neighbor $y$ of $x$, we also have
\begin{eqnarray*}
T^{n}f(y) &  = & \sum_{z\in\dX}T(y,z)T^{n-1}f(z)\\
& \ge & T(y,x)T^{n-1}f(x)\\
& \ge & d^{-1}T^{n-1}f(x).
\end{eqnarray*}
Combining those two bounds, we obtain 
\begin{eqnarray*}
T^{n}f(x) & \le & de^{\ell}T^{n}f(y).
\end{eqnarray*}
This was established for $n\ge 1$, but the conclusion actually also holds when $n=0$, by definition of $\ell$. Finally, averaging this inequality over the variable $n\in\mathbb N$ according to the Poisson distribution with parameter $t\ge 0$ yields
\begin{eqnarray*}
P_tf(x) & \le & de^{\ell}P_tf(y).
\end{eqnarray*}
Since this holds for any neighbors $x,y\in\dX$, the claim is proved. 
\end{proof}
Our second lemma is a general bound on the Dirichlet form, which only uses reversibility.
\begin{lemma}[Upper-bound on the Dirichlet form]\label{lm:dirichlet}For any $f,g\colon\dX\to\dR$, we have
\begin{eqnarray*}
   \E(f^2,g ) & \le & 2\sqrt{\E(f){\EE\left[f^2\Gamma g\right]}}.
\end{eqnarray*}
\end{lemma}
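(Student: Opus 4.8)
\emph{Proof proposal.} The plan is to estimate $\E(f^2,g)=\EE[\Gamma(f^2,g)]$ by a double application of the Cauchy--Schwarz inequality: the first application will reduce one of the two factors to $\sqrt{\E(f)}$, and the second, combined with reversibility, will identify the residual factor with $\EE[f^2\Gamma g]$ up to a harmless constant.

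First I would expand, at each fixed $x\in\dX$, the increment of $f^2$ as $f(y)^2-f(x)^2=(f(y)-f(x))(f(y)+f(x))$ inside the defining sum of $\Gamma(f^2,g)(x)$, so that
$$\Gamma(f^2,g)(x) \ = \ \frac12\sum_{y\in\dX}T(x,y)\,(f(y)-f(x))\,(f(y)+f(x))\,(g(y)-g(x)).$$
Applying Cauchy--Schwarz to the sum over $y$, with weights $T(x,y)$, separates the factor $\bigl(\sum_y T(x,y)(f(y)-f(x))^2\bigr)^{1/2}=\sqrt{2\Gamma(f)(x)}$ from the remaining factor $R(x):=\bigl(\tfrac12\sum_y T(x,y)(f(y)+f(x))^2(g(y)-g(x))^2\bigr)^{1/2}$. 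Taking expectation with respect to $\pi$ and applying Cauchy--Schwarz once more, this time in $L^2(\pi)$, yields
$$\E(f^2,g) \ \le \ \EE\bigl[|\Gamma(f^2,g)|\bigr] \ \le \ \sqrt{2\,\EE[\Gamma(f)]}\cdot\sqrt{\tfrac12\,\EE[R^2]} \ = \ \sqrt{\E(f)}\cdot\sqrt{2\,\EE[R^2]}.$$

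It then remains to bound $\EE[R^2]$. Using the elementary inequality $(f(y)+f(x))^2\le 2f(x)^2+2f(y)^2$ splits $2\,\EE[R^2]=\EE\bigl[\sum_y T(\cdot,y)(f(y)+f(\cdot))^2(g(y)-g(\cdot))^2\bigr]$ into two pieces. The piece carrying $f(x)^2$ equals $2\,\EE[f^2\Gamma g]$ directly from the definition of $\Gamma g$. The piece carrying $f(y)^2$ is, after invoking the reversibility identity $\pi(x)T(x,y)=\pi(y)T(y,x)$ to exchange the summation variables $x$ and $y$ (note that the weight $(g(y)-g(x))^2$ is symmetric, hence unaffected), likewise equal to $2\,\EE[f^2\Gamma g]$. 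Therefore $2\,\EE[R^2]\le 4\,\EE[f^2\Gamma g]$, and substituting this into the previous display gives $\E(f^2,g)\le 2\sqrt{\E(f)\,\EE[f^2\Gamma g]}$, as claimed.

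I expect the only genuinely delicate point to be the symmetrization step, which is exactly where reversibility is used in an essential way; once one is careful that the symmetric weight $(g(y)-g(x))^2$ is preserved under the change of variables, both pieces collapse to the same quantity and the constant $2$ comes out cleanly. Everything else — the two applications of Cauchy--Schwarz and the pointwise bound on $(f(y)+f(x))^2$ — is routine bookkeeping.
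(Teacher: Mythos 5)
Your proof is correct in substance and reaches the stated constant $2$, but it takes a genuinely different route from the paper's. The paper uses reversibility \emph{first}, as an exact identity: since the weight $\pi(x)T(x,y)$ and the product $(f(x)-f(y))(g(x)-g(y))$ are symmetric in $(x,y)$, the factor $\frac{f(x)+f(y)}{2}$ may be replaced by $f(x)$ with no loss, giving $\E(f^2,g)=2\,\EE[f\Gamma(f,g)]$; the pointwise bound $\Gamma(f,g)\le\sqrt{\Gamma f\,\Gamma g}$ and one Cauchy--Schwarz in $L^2(\pi)$ then finish the proof immediately. You instead keep the factor $f(x)+f(y)$ inside, run a weighted Cauchy--Schwarz in $y$ pairing $f(y)-f(x)$ against $(f(y)+f(x))(g(y)-g(x))$, then Cauchy--Schwarz in $L^2(\pi)$, and only at the end invoke $(f(x)+f(y))^2\le 2f(x)^2+2f(y)^2$ together with the reversibility swap to control $\EE[R^2]$ by $\EE[f^2\Gamma g]$. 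Both proofs are ``reversibility plus two applications of Cauchy--Schwarz''; the paper's ordering is slightly cleaner because the symmetrization is performed on an exact identity and the crude square bound is never needed, but your deferred version loses nothing in the end: carried out carefully it yields exactly $\E(f^2,g)\le 2\sqrt{\E(f)\,\EE[f^2\Gamma g]}$.

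One caveat: as written, your constants contain two slips that happen to cancel. First, $\sqrt{2\,\EE[\Gamma f]}\cdot\sqrt{\tfrac12\EE[R^2]}$ equals $\sqrt{\E(f)\,\EE[R^2]}$, not $\sqrt{\E(f)}\cdot\sqrt{2\,\EE[R^2]}$ (the latter is larger by $\sqrt2$, so the chain of inequalities survives, but the displayed equality is wrong). Second, after applying $(f(x)+f(y))^2\le 2f(x)^2+2f(y)^2$, each of the two resulting pieces equals $2\cdot 2\,\EE[f^2\Gamma g]=4\,\EE[f^2\Gamma g]$ (the factor $2$ from the elementary inequality times $\sum_y T(x,y)(g(y)-g(x))^2=2\Gamma g(x)$), so the correct conclusion is $\EE[R^2]\le 4\,\EE[f^2\Gamma g]$ rather than your stronger claim $2\,\EE[R^2]\le 4\,\EE[f^2\Gamma g]$; the latter is actually false in general (take $f$ constant, where $\EE[R^2]=4\,\EE[f^2\Gamma g]$). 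With the corrected bookkeeping, $\E(f^2,g)\le\sqrt{\E(f)\,\EE[R^2]}\le 2\sqrt{\E(f)\,\EE[f^2\Gamma g]}$, exactly as claimed, so the argument is sound once the constants are tracked properly.
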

\begin{proof}
Using reversibility, we may write
\begin{eqnarray*}
   \E(f^2,g )& = & \frac12 \sum_{x,y\in \dX} \pi(x)T(x,y)(f^2(x)-f^2(y))(g(x)-g(y)) \\
   & = &\frac12 \sum_{x,y\in \X} \pi(x)T(x,y)(f(x)+f(y))(f(x)-f(y))(g(x)-g(y)) \\
   & = & \sum_{x,y\in \X} \pi(x)T(x,y)f(x)(f(x)-f(y))(g(x)-g(y))\\
   & = & 2\EE \left[f\Gamma(f,g)\right].
\end{eqnarray*}
The claim now readily follows from the two bounds
\begin{eqnarray*}
\Gamma(f,g) & \le & \sqrt{\Gamma f\Gamma g}\\
\EE[f\sqrt{\Gamma f \Gamma g}] & \le & \sqrt{\EE[f^2\Gamma g]\EE[\Gamma f]},
\end{eqnarray*}
which are two particular instances of the Cauchy-Schartz inequality.
\end{proof}
Finally, our third ingredient is a result borrowed from \cite[Lemma~2]{pedrotti2025newcutoffcriterionnonnegatively}, which constitutes an approximate discrete version of the chain rule $\EE[f\Gamma \log f]=\E(f,\log f)$ valid for diffusions. 
\begin{lemma}[Approximate chain rule]\label{lm:approxchain}For any $f\colon\dX\to(0,\infty)$, we have
\begin{eqnarray*}
\EE\left[f\Gamma \log f\right] & \le & \left(1+\Lip(\log f)\right)\cE(f,\log f).
\end{eqnarray*}
\end{lemma}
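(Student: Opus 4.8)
The plan is to reduce the claim to an elementary one‑variable inequality, edge by edge. Write $\ell:=\Lip(\log f)$ and, using reversibility, set $w(x,y):=\pi(x)T(x,y)=\pi(y)T(y,x)$, a symmetric weight which vanishes precisely when $T(x,y)=0$ (recall $\pi$ is fully supported). Expanding the definitions of $\Gamma$ and $\cE$,
\begin{eqnarray*}
\EE\left[f\,\Gamma\log f\right] & = & \frac12\sum_{x,y\in\dX}w(x,y)\,f(x)\bigl(\log f(y)-\log f(x)\bigr)^2,\\
\cE(f,\log f) & = & \frac12\sum_{x,y\in\dX}w(x,y)\bigl(f(y)-f(x)\bigr)\bigl(\log f(y)-\log f(x)\bigr).
\end{eqnarray*}
Since $w$ and $\bigl(\log f(y)-\log f(x)\bigr)^2$ are both symmetric in $(x,y)$, the first sum is unchanged if $f(x)$ is replaced by $f(y)$; averaging the two expressions gives the symmetrized form
\begin{eqnarray*}
\EE\left[f\,\Gamma\log f\right] & = & \frac14\sum_{x,y\in\dX}w(x,y)\bigl(f(x)+f(y)\bigr)\bigl(\log f(y)-\log f(x)\bigr)^2.
\end{eqnarray*}
It therefore suffices to prove, for each ordered pair $(x,y)$ with $T(x,y)>0$, the pointwise estimate
\begin{eqnarray*}
\bigl(f(x)+f(y)\bigr)\bigl(\log f(y)-\log f(x)\bigr)^2 & \le & 2(1+\ell)\bigl(f(y)-f(x)\bigr)\bigl(\log f(y)-\log f(x)\bigr),
\end{eqnarray*}
and then to sum these estimates against the nonnegative weights $w(x,y)/4$.

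\textbf{The pointwise estimate.} By symmetry we may assume $f(y)\ge f(x)$; the inequality is trivial when $f(x)=f(y)$, so assume $f(y)>f(x)$ and set $s:=\log\bigl(f(y)/f(x)\bigr)$. Because $T(x,y)>0$, the very definition of $\ell=\Lip(\log f)$ forces $0<s\le\ell$. Substituting $f(y)=f(x)e^{s}$ and dividing through by $f(x)\,s>0$, the pointwise estimate becomes $(1+e^{s})\,s\le 2(1+\ell)(e^{s}-1)$, and since $s\le\ell$ it is enough to establish
\begin{eqnarray*}
(1+e^{s})\,s & \le & 2(1+s)(e^{s}-1)\qquad\text{for all }s\ge 0.
\end{eqnarray*}
Moving everything to one side, this amounts to $\psi(s):=(2+s)e^{s}-2-3s\ge 0$ on $[0,\infty)$, which follows from $\psi(0)=\psi'(0)=0$ together with $\psi''(s)=(4+s)e^{s}>0$: indeed $\psi'$ is then nonnegative, hence $\psi$ is nondecreasing from $\psi(0)=0$.

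\textbf{Where the work is.} There is no serious obstacle here — the lemma is elementary. The one point worth getting right is the symmetrization step, which trades the asymmetric integrand $f(x)\bigl(\log f(y)-\log f(x)\bigr)^2$ for the symmetric one $\tfrac12\bigl(f(x)+f(y)\bigr)\bigl(\log f(y)-\log f(x)\bigr)^2$; this is what allows the clean reduction to a single scalar inequality in the ratio $e^{s}$, after which the convexity argument for $\psi$ closes the proof. Summing the verified pointwise bounds over all pairs with $T(x,y)>0$ (all other terms carry weight $w(x,y)=0$) yields exactly $\EE[f\,\Gamma\log f]\le(1+\ell)\,\cE(f,\log f)$.
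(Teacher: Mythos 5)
Your proof is correct. Note that the paper itself does not prove this lemma at all: it is imported verbatim from the cited reference (Lemma~2 of the Pedrotti preprint), so there is no in-paper argument to compare against. Your edge-by-edge reduction is sound: the symmetrization identity
\[
\EE\left[f\,\Gamma\log f\right] \;=\; \frac14\sum_{x,y}\pi(x)T(x,y)\bigl(f(x)+f(y)\bigr)\bigl(\log f(y)-\log f(x)\bigr)^2
\]
is a legitimate use of reversibility, every term of $\cE(f,\log f)$ is nonnegative (so the pointwise comparison can indeed be summed), the constraint $0<s\le\ell$ on each edge is exactly what the definition of $\Lip(\log f)$ gives, and the scalar inequality $(1+e^{s})s\le 2(1+s)(e^{s}-1)$ follows from your computation $\psi(s)=(2+s)e^{s}-2-3s$, $\psi(0)=\psi'(0)=0$, $\psi''>0$. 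The bookkeeping of the constants also checks out: the factor $2$ in the pointwise bound combines with the weights $w(x,y)/4$ to produce exactly $(1+\ell)\,\cE(f,\log f)$. In effect you have supplied a short self-contained proof of an ingredient the authors chose to cite rather than reprove, which makes the paper's argument for Theorem~\ref{th:BE} fully self-contained.
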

\begin{proof}[Proof of Theorem~\ref{eq: bakry-emery}]
Fix $f\colon\dX\to(0,\infty)$ and $t\ge 0$. By reversibility and Lemma \ref{lm:dirichlet}, we have
\begin{eqnarray*}
\E( P_t f,\log P_tf) & = & \E(f,P_t\log P_tf)\\
& \le & 2\sqrt{\E(\sqrt{f})\EE\left[f\Gamma P_t\log P_t f\right]}.\end{eqnarray*}
On the other hand, 
\begin{eqnarray*}
\EE\left[f\Gamma P_t\log P_t f\right] & \le & e^{-2\kappa t}\EE\left[fP_t\Gamma\log P_t f\right]\\
& = & e^{-2\kappa t}\EE\left[(P_tf)\Gamma \log P_t f\right]\\
& \le & e^{-2\kappa t}\left(1+\Lip(\log P_tf)\right)\E(P_tf,\log P_tf)\\
& \le & e^{-2\kappa t}\left(1+\log d+\Lip(\log f)\right)\E(P_tf,\log P_tf),
\end{eqnarray*}
where we have successively used the Bakry-\'Emery curvature condition (\ref{eq: bakry-emery}), reversibility, Lemma \ref{lm:approxchain}, and Lemma \ref{lm:regularity}. Combining those two observations, we deduce that
\begin{eqnarray*}
\E( P_t f,\log P_tf) & \le & 4e^{-2\kappa t}\left(1+\log d+\Lip(\log f)\right)\E(\sqrt{f}).
\end{eqnarray*}
Finally, since this is valid for all $t\in[0,\infty)$, we may integrate with respect to $t$ to obtain
\begin{eqnarray*}
       \Ent(f) & \le & \frac{2}{\kappa}\left(1+\log d+\Lip(\log f)\right)\E(\sqrt{f}).
\end{eqnarray*}
Since this holds for any function $f\colon\dX\to(0,\infty)$, we may finally choose one that achieves equality in the log-Sobolev inequality (\ref{LSI}). In view of Theorem \ref{th: regularity of extremal}, we obtain
\begin{eqnarray*}
       \tls\cE(\sqrt{f}) & \le & \frac{2}{\kappa}(1+15\log d)\E(\sqrt{f}),
\end{eqnarray*}
which yields the desired conclusion provided $f$ is not constant. On the other hand, if there is no non-constant function achieving equality in (\ref{LSI}), then Lemma \ref{lm:extremizers} guarantees that $\tls=2\trel$, which in view of the classical bound $\trel\le \frac{1}{\kappa}$, is more than enough to conclude.
\end{proof}

\section*{Acknowledgment.}This work was initiated during a visit of P.Y.  at Université Paris-Dauphine as an Invited Professor. J.S. acknowledges support from the ERC consolidator grant CUTOFF (101123174). Views and opinions expressed are however those of the authors only and do not necessarily reflect those of the European Union or the European Research Council Executive Agency. Neither the European Union nor the granting authority can be held responsible for them.

\bibliographystyle{plain}
\bibliography{draft}
\section*{Author affiliations}
Justin Salez: CEREMADE, Université Paris-Dauphine \& PSL, Place du Maréchal de Lattre de Tassigny, F-75775 Paris Cedex 16, FRANCE. 
\texttt{\small e-mail:  justin.salez@dauphine.psl.eu}\\

\noindent Pierre Youssef: 
Division of Science, NYU Abu Dhabi, Saadiyat Island, Abu Dhabi, UAE \& Courant
Institute of Mathematical Sciences, New York University, 251 Mercer st, New York,
NY 10012, USA. 
\texttt{\small e-mail:  yp27@nyu.edu}
\end{document}